\def\?[#1]{\textbf{[#1]}\marginpar{\Large{\textbf{??}}}}
\let\epsilon=\varepsilon % sorry Knuth
\newtheorem{thm}{Theorem}
\newtheorem{prop}{Proposition}%[section]
\newtheorem{defi}[prop]{Definition}
\newtheorem{lem}[prop]{Lemma}
\newtheorem{corr}[prop]{Corollary}
\newtheorem{rem}{Remark}
\theoremstyle{definition}
\newtheorem{ex}{Example}
\numberwithin{equation}{section}
\let\Im=\Imag
\title{0-TH ORDER PSEUDO-DIFFERENTIAL OPERATOR ON THE CIRCLE}
\author{ZHONGKAI TAO}
\email{tzk320581@berkeley.edu}
\address{Department of Mathematics, University of California,
Berkeley, CA 94720, USA}
\begin{document}

    \begin{abstract}
    \noindent
    In this paper we consider 0-th order pseudodifferential operators on the circle. We show that inside any interval disjoint from critical values of the principal symbol, the spectrum is absolutely continuous with possibly finitely many embedded eigenvalues. We also give an example of embedded eigenvalues.
    \end{abstract}

\maketitle

\section{Introduction}
The study of 0-th order pseudodifferential operators has recently attracted new attention because of connections to fluid mechanics (see the work of Colin de Verdi\`ere-Saint-Raymond \cite{CdVS},\cite{CdV} and Dyatlov-Zworski \cite{DyZw}). In this note we address the special case of the circle. We start with a general result valid for any compact manifold.
\begin{thm}\label{thm1}
Let $M$ be a compact smooth manifold.
Suppose $H\in \Psi^0_{1,0}(M)$ with $\sigma(H)=a\in S^0_{1,0}(T^*M)$. Then $${\rm Spec}_{\rm ess}(H)=\{\lambda|\exists (x_j,\xi_j)\in T^*M, |\xi_j|\to \infty, \mbox{ such that}\\ \lim\limits_{j\to \infty}a(x_j,\xi_j)=\lambda\}$$
\end{thm}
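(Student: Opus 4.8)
The plan is to identify the right-hand side, call it $\Lambda$, with the set of $\lambda$ for which $H-\lambda$ fails to be Fredholm on $L^2(M)$, using the two standard mechanisms: ellipticity produces a parametrix (hence the Fredholm property), whereas a failure of ellipticity produces a singular Weyl sequence built from wave packets. First note that $\Lambda$ is well defined independently of the representative of the symbol: if $a'=a+r$ with $r\in S^{-1}_{1,0}$, then $r(x_j,\xi_j)\to 0$ whenever $|\xi_j|\to\infty$, so the limiting values at fiber infinity are unchanged. Recall also that $H\in\Psi^0_{1,0}(M)$ is bounded on $L^2(M)$ and that operators in $\Psi^{-\infty}(M)$ are compact on $L^2(M)$, since on a compact manifold they map $L^2$ into $C^\infty\subset H^s$ for every $s$. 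I take ${\rm Spec}_{\rm ess}(H)$ to be the set of $\lambda$ for which $H-\lambda$ is not Fredholm.

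For the inclusion $\Lambda^c\subset({\rm Spec}_{\rm ess}(H))^c$, suppose $\lambda\notin\Lambda$. I claim there are $c>0$ and $R>0$ with $|a(x,\xi)-\lambda|\ge c$ for all $|\xi|\ge R$; otherwise one could select $(x_j,\xi_j)$ with $|\xi_j|\ge j$ and $|a(x_j,\xi_j)-\lambda|<1/j$, forcing $\lambda\in\Lambda$. Thus $a-\lambda$ is elliptic of order $0$, and the usual iterative construction yields a parametrix $B\in\Psi^0_{1,0}(M)$ with $B(H-\lambda)=I-K_1$ and $(H-\lambda)B=I-K_2$, where $K_1,K_2\in\Psi^{-\infty}(M)$ are compact. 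Hence $H-\lambda$ is Fredholm and $\lambda\notin{\rm Spec}_{\rm ess}(H)$.

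For the reverse inclusion $\Lambda\subset{\rm Spec}_{\rm ess}(H)$, fix $\lambda\in\Lambda$ and a sequence $(x_j,\xi_j)$ with $|\xi_j|\to\infty$ and $a(x_j,\xi_j)\to\lambda$. Passing to a subsequence, $x_j\to x_0$ lies in a fixed coordinate chart, and I set $h_j=|\xi_j|^{-1}\to 0$. I then build normalized wave packets
\[
u_j(x)=c_j\,\chi(x)\,e^{i x\cdot\xi_j}\,g\!\left(\frac{x-x_j}{\sqrt{h_j}}\right),
\]
where $\chi$ is a cutoff equal to $1$ near $x_0$, $g$ is a fixed Gaussian, and $c_j$ normalizes $\|u_j\|_{L^2}=1$. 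These are concentrated at $x_j$ on the spatial scale $\sqrt{h_j}$ and, on the Fourier side, at $\xi_j$ on the scale $h_j^{-1/2}=|\xi_j|^{1/2}\ll|\xi_j|$. Since their frequency support escapes to infinity, $u_j\rightharpoonup 0$ weakly in $L^2$. By pseudolocality the action of $H$ on $u_j$ is, up to an $O(h_j^\infty)$ error, given by $\Op(a)$ in the chart, and a Taylor expansion of $a$ about $(x_j,\xi_j)$ gives $(H-a(x_j,\xi_j))u_j\to 0$ in $L^2$: the $x$-derivative term is controlled by $\partial_x a=O(1)$ times the spatial spread $O(\sqrt{h_j})$, while the $\xi$-derivative term is controlled by $\partial_\xi a=O(|\xi_j|^{-1})$ times the frequency spread $O(|\xi_j|^{1/2})$, both of size $O(\sqrt{h_j})$. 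Combining with $a(x_j,\xi_j)\to\lambda$ yields $\|(H-\lambda)u_j\|_{L^2}\to 0$. Thus $\{u_j\}$ is a normalized, weakly null singular sequence for $H-\lambda$; extracting an almost-orthonormal subsequence shows $H-\lambda$ is not (upper semi-)Fredholm, so $\lambda\in{\rm Spec}_{\rm ess}(H)$.

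The main obstacle is the quasimode estimate in the last paragraph: because the class $S^0_{1,0}$ only gives $\partial_x a=O(1)$ (no decay in $x$), the spatial localization scale $\sqrt{h_j}$ must be chosen so that the $x$- and $\xi$-Taylor contributions balance and both vanish, and one must verify that the manifold cutoff and the reduction to a single chart introduce only negligible errors. All remaining terms in the expansion are of higher order in $\sqrt{h_j}$ and are handled by routine symbol estimates.
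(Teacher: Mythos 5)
Your proposal is correct, and while its first half coincides with the paper's, its second half takes a genuinely different route. For the inclusion $\Lambda^{c}\subset({\rm Spec}_{\rm ess}(H))^{c}$ both arguments are the same: $\lambda\notin\Lambda$ gives uniform ellipticity of $a-\lambda$ at fiber infinity, hence a parametrix and Fredholmness (the paper stops after one step, with remainder in $\Psi^{-1}_{1,0}$, which is already compact on a compact manifold; your iteration to $\Psi^{-\infty}$ changes nothing). For the reverse inclusion the paper uses quasimodes $\chi e^{ix\xi_j}$ with a cutoff of small but \emph{fixed} scale, and kills the error term softly: $[\operatorname{Op}(a),\chi]\in\Psi^{-1}_{1,0}$ is compact and $e^{ix\xi_j}$ is weakly null, so no Taylor expansion or scale balancing is needed; but this only yields $\lambda\in{\rm Spec}(H)$, so the paper must then run a separate counting argument ($N$ disjointly supported quasimodes whose projections onto $\ker(H-\lambda)$ are almost orthonormal, forcing $\dim\ker(H-\lambda)=\infty$) to rule out $\lambda\in{\rm Spec}_{\rm d}(H)$ --- a step that relies on the self-adjoint decomposition $L^2=\ker(H-\lambda)\oplus\ker(H-\lambda)^{\perp}$ with $(H-\lambda)^{-1}$ bounded on the second summand. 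You instead use shrinking-scale wave packets, prove the quasimode estimate by Taylor expansion with the $\sqrt{h_j}$ scale balancing $\partial_x a=O(1)$ against $\partial_\xi a=O(h_j)$, and conclude in one stroke from the Fredholm characterization: a normalized, weakly null singular sequence is incompatible with upper semi-Fredholmness. What each approach buys: yours requires a sharper (though standard) coherent-state computation, but needs no self-adjointness and no kernel-projection argument, so it proves the theorem as literally stated (the paper's Theorem \ref{thm1} does not assume $H=H^*$, yet its own proof implicitly uses it in the last paragraph); the paper's approach keeps the microlocal analysis entirely soft at the cost of that extra functional-analytic step. One caveat you correctly flag and should keep explicit: for non-self-adjoint $H$ the Fredholm definition of ${\rm Spec}_{\rm ess}$ and the ``spectrum minus discrete spectrum'' definition used by the paper can differ, so the two proofs establish the statement under definitions that coincide only in the self-adjoint (or normal) case.
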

This is a slight generalization of \cite[Theorem 2.1]{CdV} where the operators are assumed to have classical symbol (in particular, $a$ is homogeneous of degree $0$).

We then specialize to the case of the circle $S^1=\mathbb{R}/\mathbb{Z}$ and assume $a$ is homogeneous of degree $0$. With the notation reviewed in Section \ref{pre}, we have
\begin{thm}\label{thm2}
Supose $H\in \Psi^0_{1,0}(S^1):L^2(S^1)\to L^2(S^1)$ is self-adjoint and $\sigma(H)$ is homogeneous of degree $0$. Let $N=\{a(x,\pm 1):\partial_x a(x,\pm 1)=0\}$, $a_\pm={\rm min}a(x,\pm 1 )$, $a^\pm={\rm max}a(x,\pm 1 )$, then \begin{enumerate}
    \item ${\rm Spec}_{\rm sc}(H)\subset N$
    \item $\forall I\Subset \mathbb{R}\setminus N$, $|{\rm Spec}_{\rm pp}(H)\cap I|<\infty$
    \item ${\rm Spec}_{\rm ac}(H)\setminus N=[a_-,a^-]\cup [a_+,a^+]\setminus N$.
\end{enumerate}
This means that inside an interval disjoint from the critical values, the spectrum is absolutely continuous with possibly finitely many embedded eigenvalues.
\end{thm}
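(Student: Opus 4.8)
The plan is to run a positive-commutator (Mourre) argument for $H$ itself, using the symbol calculus only to locate the essential spectrum and to compute the principal symbol of the relevant commutator. First I would record, as an immediate consequence of Theorem \ref{thm1} and homogeneity, that $\sigma_{\mathrm{ess}}(H)=[a_-,a^-]\cup[a_+,a^+]$: since $a(x,\xi)=a(x,\sgn\xi)$ for $|\xi|\ge 1$, the set of limits of $a(x_j,\xi_j)$ with $|\xi_j|\to\infty$ is the closure of the union of the ranges of $x\mapsto a(x,+1)$ and $x\mapsto a(x,-1)$, which is exactly the stated union of two closed intervals. The inclusion $\sigma_{\mathrm{ac}}(H)\setminus N\subseteq([a_-,a^-]\cup[a_+,a^+])\setminus N$ in (3) then follows from $\sigma_{\mathrm{ac}}\subseteq\sigma_{\mathrm{ess}}$. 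Note also that the four band endpoints $a_\pm,a^\pm$ are critical values, hence lie in $N$; and $N$ is closed, being the image of the compact critical set $\{a'(\cdot,\pm1)=0\}$ under the continuous maps $a(\cdot,\pm1)$. Thus $\mathbb{R}\setminus N$ is open and it suffices to prove everything on an arbitrary $I\Subset\mathbb{R}\setminus N$.

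The dynamical input is that, because $a$ is homogeneous of degree $0$, $\partial_\xi a=0$ for $|\xi|\ge 1$, so the Hamilton flow of $a$ is $\dot x=0,\ \dot\xi=-a'(x,\sgn\xi)$; on $a^{-1}(\bar I)$ one has $a'(x,\pm1)\ne 0$ (no critical points, as $\bar I\cap N=\emptyset$), so the flow is non-trapping at fibre infinity. Guided by this, I would build a self-adjoint conjugate operator $A\in\Psi^1_{1,0}(S^1)$ whose symbol at fibre infinity is $g(x,\xi)=\beta(x,\sgn\xi)\,\xi$. Writing $\Pi_\pm$ for the Szeg\H{o} projections onto $\pm$ frequencies (which commute with $D_x$), I would take $A=\tfrac12\sum_{\pm}\big(\beta_\pm(x)D_x\Pi_\pm+(\beta_\pm(x)D_x\Pi_\pm)^*\big)$, allowing $\beta_\pm$ to be chosen independently on the two sectors $\pm\xi>0$. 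A direct symbol computation gives, in the sector $\pm\xi>0$ with $|\xi|\ge1$, that the principal symbol of $i[H,A]$ equals $\{a,g\}=-a'(x,\pm1)\,\beta_\pm(x)$; choosing $\beta_\pm=-\sgn a'(\cdot,\pm1)$ (smoothed, which is possible since the sign of $a'(\cdot,\pm1)$ is locally constant on the open set $\{x : a(x,\pm1)\in I\}$ away from critical points) yields $\{a,g\}=|a'(x,\pm1)|\ge c>0$ on $a^{-1}(\bar I)$ at fibre infinity. That $\Pi_\pm$ lets the sectors be handled separately is what resolves the potential sign conflict between the two requirements; producing this escape function with a lower bound uniform over all $\lambda\in I$ is the main point of the construction, and the step I expect to be the principal obstacle.

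With $A$ in hand, the Mourre estimate $E_I(H)\,i[H,A]\,E_I(H)\ge c\,E_I(H)+K$, with $K$ compact, follows from the symbol positivity by the sharp G\aa{}rding inequality, the bounded-$\xi$ region and all lower-order symbols contributing only the compact remainder $K$. Order counting shows $i[H,A]$ and $i[i[H,A],A]$ are operators of order $0$, hence bounded; since $H$ is bounded there are no domain issues, so $H\in C^2(A)$ and the hypotheses of Mourre theory hold. The remaining difficulty is purely the commutator bookkeeping; once it and the escape-function construction are in place the conclusions are standard. I would then read off a limiting absorption principle on $I$ away from the eigenvalues, which are finitely many with finite multiplicity (thresholds being exactly $N$), giving (2) directly and showing $\sigma_{\mathrm{sc}}(H)\cap I=\emptyset$; taking the union over all $I\Subset\mathbb{R}\setminus N$ gives (1).

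For (3), on $I$ the spectrum off the finite eigenvalue set $E\cap I$ is purely absolutely continuous, so $([a_-,a^-]\cup[a_+,a^+])\cap I$, with the finitely many points of $E\cap I$ removed, is contained in $\sigma_{\mathrm{ac}}(H)$. Because $I\cap N=\emptyset$, the set $([a_-,a^-]\cup[a_+,a^+])\cap I$ excludes the band endpoints and so has no isolated points; since $\sigma_{\mathrm{ac}}(H)$ is closed, taking closures fills in the finite set $E\cap I$ and yields $([a_-,a^-]\cup[a_+,a^+])\cap I\subseteq\sigma_{\mathrm{ac}}(H)$. Combined with the reverse inclusion from the first paragraph, this proves (3) on $I$, and hence globally.
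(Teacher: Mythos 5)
Your overall route is the same as the paper's: a positive-commutator (Mourre) argument with a first-order conjugate operator adapted to the two sectors $\pm\xi>0$, positivity modulo compact errors, and the standard Mourre conclusions (limiting absorption, finitely many eigenvalues, no singular continuous spectrum) yielding (1)--(3). The main structural difference is the conjugate operator. You build $A$ from smoothed sign functions $\beta_\pm=-\sgn\partial_xa(\cdot,\pm1)$ glued together with Szeg\H{o} projections, and you single out this escape-function construction as the principal obstacle. The paper's choice dissolves that obstacle: it takes $A=b^w(x,D)$ with $b=\phi(x,\xi)\xi$ and $\phi=\partial_xa_0(x,\xi)$; since $\partial_xa_0$ is itself homogeneous of degree $0$, it is already sector-dependent (no projections needed), and $\sigma(i[A,H])=\phi\,\partial_xa_0=|\partial_xa_0|^2$ is nonnegative on all of $T^*S^1$ and bounded below by $C>0$ on $\{a_0(x,\pm 1)\in(a',b')\}$, so no sign bookkeeping or smoothing is required. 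Your construction does work (the localized symbol $(\chi\circ a)^2\,|\partial_xa|$ is still nonnegative provided $\beta_\pm$ agrees with the sign on a neighborhood of $\{a(\cdot,\pm1)\in\supp\chi\}$), but it is more labor for the same estimate. On the other hand, your treatment of (3) --- band endpoints $a_\pm,a^\pm$ are critical values hence lie in $N$, $\sigma_{\rm ac}(H)$ is closed, and the bands have no isolated points inside $\mathbb{R}\setminus N$, so the finitely many removed eigenvalues get filled back in by taking closures --- is more explicit than the paper, which treats (3) as immediate once the essential spectrum and the Mourre conclusion are known.

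The one genuine gap is the localization step in the Mourre estimate. You assert that $E_I(H)\,i[H,A]\,E_I(H)\geq c\,E_I(H)+K$ ``follows from the symbol positivity by the sharp G\aa rding inequality,'' but $E_I(H)$ (or $\chi(H)$, $\chi\in C_0^\infty$) is defined by the functional calculus and is not a priori a pseudodifferential operator: it has no symbol, so G\aa rding cannot be applied to the sandwiched operator as written. This is exactly where the paper spends most of its proof: using the almost-analytic extension and the Helffer--Sj\"ostrand formula (Lemma \ref{lem8} and Proposition \ref{lem9}), it shows $\chi(H)=(\chi\circ a_0)(x,D)+K$ with $K$ compact, which is what licenses the symbol computation $\sigma(\chi(H)[iA,H]\chi(H))=(\chi\circ a_0)^2|\partial_xa_0|^2\geq C(\chi\circ a_0)^2=C\,\sigma(\chi^2(H))$ and hence the estimate modulo a compact operator; the spectral-projection version you want then follows by multiplying both sides by $E_I(H)$, since $\chi(H)E_I(H)=E_I(H)$ when $\chi\equiv 1$ on $I$. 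This functional-calculus input is not ``commutator bookkeeping''; without it your stated Mourre estimate is unjustified. With it inserted, the rest of your outline goes through and matches the paper's argument.
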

We conclude this introduction with an example of an operator with embedded eigenvalues.

\begin{ex}
We claim that $H=a^w(x,D)\in \Psi^0_{1,0}(S^1):L^2(S^1)\to L^2(S^1)$ has an embedded eigenvualue at 0, where $a(x,\xi)=\sin(2\pi x)(1-\chi(\xi))$ and $\chi(x)\in C^\infty_0(\mathbb{R})$ such that $\chi(\xi)=1$ at $\xi=(2k-1)\pi,(2k+1)\pi$. The corresponding eigenfunction is $u_k=e^{2k \pi ix },k\in \mathbb{Z}^n$.
Actually, we compute
\begin{align*}
    b(x,D)u_k(x)&=\frac{1}{(2\pi )^n}\int_{\mathbb{R}^n}b(x,\xi)e^{ix\xi}\mathcal{F}(u_k)(\xi)d\xi\\
    &=\int_{\mathbb{R}^n}b(x,\xi)e^{ix\xi}\delta(\xi-2k\pi )d\xi\\
    &=b(x,2k\pi )u_k(x)
\end{align*}
and
\begin{align*}
    a^w(x,D)u_k(x)&=(e^{\frac{i}{2}\left\langle D_x,D_\xi\right\rangle}a)(x,D)u_k(x)\\
    &=(e^{\frac{i}{2}\left\langle D_x,D_\xi\right\rangle}a)(x,2k\pi )u_k\\
    &=\frac{u_k}{\pi ^n}\int_{\mathbb{R}^n\times\mathbb{R}^n}e^{-2iy\eta}a(x+y,2k\pi +\eta)dyd\eta \\
    &=\frac{u_k}{\pi ^n}\int_{\mathbb{T}^n\times\mathbb{R}^n}\sum_{l\in \mathbb{Z}^n}e^{-2i(y+l)\eta}a(x+y,2k\pi +\eta)dyd\eta\\
    &=u_k(x)\int_{\mathbb{T}^n\times\mathbb{R}^n}\sum_{l\in \mathbb{Z}^n}\delta(\eta+l\pi )e^{-2iy\eta}a(x+y,2k\pi +\eta)dyd\eta\\
    &=u_k(x)\sum_{l\in \mathbb{Z}^n}\int_{\mathbb{T}^n}e^{-2\pi i ly}a(x+y,2k\pi +l\pi )dy\\
    &=u_k(x)\sum_{l\in \mathbb{Z}^n}e^{2\pi i lx}\int_{\mathbb{T}^n}e^{-2\pi i lv}a(v,2k\pi +l\pi )dv
\end{align*}
%By theorem \ref{thm2} we get that $a^w(x,D)$ has absolutely continuous spectrum inside $(-1+\epsilon,1-\epsilon)$ with possibly finitely many embedded eigenvalues for any $\epsilon>0$.
In the sum the terms with $l\neq 1,-1$ will not appear. But the $l=\pm 1$ terms are $0$ due to our construction.
So we get $a^w(x,D)u_k(x)=0$.
\end{ex}
\begin{ex}
The first example can be used to produce an operator on $\mathbb{T}^2$ which has an embedded eigenvalue. In \cite{DyZw}, it was shown that the operator $H_0=\langle D\rangle^{-1}D_{x_2}+2\sin(2\pi x_1)$ on $\mathbb{T}^2$ has absolutely continuous spectrum near $0$. We can show that after perturbation by a $\Psi^{-\infty}$ self-adjoint operator, it will have an embedded eigenvalue at $0$. Define $\chi(\xi,\eta)\in C^\infty_0(\mathbb{R}^2)$ such that $\chi((2k-1)\pi,0)=\chi((2k+1)\pi,0)=1$. Let $b(x,\xi)=2\sin(2\pi x_1)\chi(\xi)$. Then $u_{k,0}=e^{2k\pi ix_1}$ satisfies
$$(H_0-b^w(x,D))u_{k,0}=2(\sin(2\pi x_1)-b^w(x,D))e^{2k\pi ix_1}=0$$
\end{ex}

\section{Preliminaries}\label{pre}
We include some preliminaries for microlocal analysis in this section. For details, see \cite[Chapter 3,4]{GrSj} or \cite[Chapter 4]{Zw}.
\begin{defi}
Let $X$ be an open set in $\mathbb{R}^n$. The space of symbols $S^m_{\rho,\delta}(X\times \mathbb{R}^N)$ is defined to be the space of all $a\in C^{\infty}(X\times \mathbb{R}^N)$ such that for any compact $K\Subset X$ and $\alpha,\beta\in\mathbb{N}^N$, there is a constant $C=C_{K,\alpha,\beta}(a)$ such that
   $$|\partial^\alpha_x\partial^\beta_\theta a(x,\xi)|\leq C(1+|\xi|)^{m-\rho|\beta|+\delta|\alpha|}, (x,\xi)\in K\times \mathbb{R}^N$$
We also define $\bar{S}^m_{\rho,\delta}(\mathbb{R}^n\times \mathbb{R}^N)$ to be the space of all $a\in C^{\infty}(\mathbb{R}^n\times \mathbb{R}^N)$ such that for any $\alpha,\beta\in\mathbb{N}^N$, there is a constant $C=C_{\alpha,\beta}(a)$ such that
   $$|\partial^\alpha_x\partial^\beta_\theta a(x,\xi)|\leq C(1+|\xi|)^{m-\rho|\beta|+\delta|\alpha|}, (x,\xi)\in \mathbb{R}^n\times \mathbb{R}^N$$
\end{defi}
\begin{defi}
Let $a\in S^m_{\rho,\delta}(T^*\mathbb{R}^n)$, $u\in \mathcal{E}'(\mathbb{R}^n)$, we define $$\operatorname{Op}_t(a)u(x)=\frac{1}{(2\pi )^n}\int_{\mathbb{R}^n}\int_{\mathbb{R}^n}a(tx+(1-t)y,\xi)e^{i(x-y)\xi}u(y)dyd\xi$$
    Define standard quantization $a(x,D)=\operatorname{Op}(a)=\operatorname{Op}_1(a)$, Weyl quantization $a^w(x,D)=\operatorname{Op}_{\frac{1}{2}}(a)$. Elements in the space $\Psi^m_{\rho,\delta}(\mathbb{R}^n):=\{\operatorname{Op}(a),a\in S^m_{\rho,\delta}(T^*\mathbb{R}^n)\}$ are called pseudodifferential operators.
\end{defi}
\begin{rem}
We have a bijective map $$\sigma:\Psi^m_{1,0}(\mathbb{R}^n)/\Psi^{m-1}_{1,0}(\mathbb{R}^n)\to S^m_{1,0}(T^*\mathbb{R}^n)/S^{m-1}_{1,0}(T^*\mathbb{R}^n)$$
$$A\mapsto \sigma(A):=e^{-ix\xi}A(e^{ix\xi})$$
where $\sigma(A)$ is called the principal symbol of $A$.
\end{rem}
\begin{prop}
Let $a\in \bar{S}^m_{1,0}(T^*\mathbb{R}^n)$, then $\operatorname{Op}(a):H^{s}(\mathbb{R}^n)\to H^{s-m}(\mathbb{R}^n)$ is bounded.
\end{prop}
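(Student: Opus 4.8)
The plan is to reduce the general estimate to the case $m=0$, $s=0$ --- that is, to $L^2(\mathbb{R}^n)$-boundedness of $\operatorname{Op}(a)$ for $a\in\bar S^0_{1,0}(T^*\mathbb{R}^n)$ --- and then to prove this core case by an almost-orthogonality argument. For the reduction, observe that $\langle\xi\rangle^t\in\bar S^t_{1,0}$ for every $t\in\mathbb{R}$, so $\langle D\rangle^t:=\operatorname{Op}(\langle\xi\rangle^t)$ is an isomorphism $H^\sigma\to H^{\sigma-t}$ for all $\sigma$ (this is essentially the definition of the Sobolev norms). Hence $\operatorname{Op}(a)\colon H^s\to H^{s-m}$ is bounded if and only if $\langle D\rangle^{s-m}\operatorname{Op}(a)\langle D\rangle^{-s}$ is bounded on $L^2$. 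By the composition calculus for the classes $\bar S^{\bullet}_{1,0}$ (see \cite{GrSj},\cite{Zw}), under which orders add, this conjugated operator equals $\operatorname{Op}(c)$ with $c\in\bar S^{(s-m)+m+(-s)}_{1,0}=\bar S^0_{1,0}$. Thus it suffices to show: $a\in\bar S^0_{1,0}\Rightarrow\operatorname{Op}(a)$ is bounded on $L^2(\mathbb{R}^n)$.

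For the $L^2$ bound I would use a Littlewood--Paley decomposition in $\xi$ together with the Cotlar--Stein lemma. Fix a partition $1=\sum_{j\ge 0}\psi_j$ with $\psi_j$ supported in the annulus $\{2^{j-1}\le|\xi|\le 2^{j+1}\}$ for $j\ge 1$ and $\psi_0$ supported near the origin, and set $a_j=a\,\psi_j$ and $A_j=\operatorname{Op}(a_j)$, so that $A=\sum_{j\ge 0}A_j$. Each $A_j$ is bounded on $L^2$ uniformly in $j$: integrating by parts in $\xi$ in the oscillatory integral for the Schwartz kernel $K_j$ and using $|\partial_\xi^\beta a_j|\le C_\beta 2^{-j|\beta|}$ on a set of $\xi$-measure $\lesssim 2^{jn}$ yields
\[
|K_j(x,y)|\le C_N\,\frac{2^{jn}}{(1+2^j|x-y|)^N}\qquad(\forall N),
\]
so that $\sup_x\int|K_j(x,y)|\,dy$ and $\sup_y\int|K_j(x,y)|\,dx$ are bounded uniformly in $j$, and Schur's test gives $\|A_j\|_{L^2\to L^2}\le C$ with $C$ depending only on finitely many seminorms of $a$.

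The heart of the matter is almost orthogonality. Since $a_j$ and $a_k$ have disjoint $\xi$-supports once $|j-k|\ge 2$, every term in the asymptotic symbol expansion of $A_j^*A_k$ --- each being a product of a $\xi$-derivative of (the symbol of) $A_j^*$ with an $x$-derivative of $a_k$, both with the same annular $\xi$-supports as $a_j$ and $a_k$ --- vanishes identically, so $A_j^*A_k$ is given purely by the non-local remainder of the composition. Exploiting the frequency separation $|\xi|\sim 2^{\min(j,k)}$ versus $|\xi|\sim 2^{\max(j,k)}$ by repeated integration by parts in the oscillatory integral for the kernel of $A_j^*A_k$ (the rapid decay of the $x$-Fourier transform of $a$, from boundedness of all $x$-derivatives, bridges the frequency gap), I expect
\[
\|A_j^*A_k\|_{L^2\to L^2}+\|A_jA_k^*\|_{L^2\to L^2}\le C_N\,2^{-N|j-k|}\qquad(\forall N),
\]
so that $\gamma(l):=\|A_j^*A_k\|^{1/2}+\|A_jA_k^*\|^{1/2}$ with $l=j-k$ is summable. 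The Cotlar--Stein lemma then gives $\|A\|_{L^2\to L^2}\le\sum_{l}\gamma(l)<\infty$, which completes the proof.

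The main obstacle is precisely this last estimate. The individual Schur bounds of the second step are \emph{not} summable in $j$, so the gain coming from frequency separation is essential, and one must carry out the non-stationary-phase/integration-by-parts analysis of the remainder carefully enough to extract decay of every order in $|j-k|$ while keeping all implied constants controlled by a fixed, finite set of symbol seminorms of $a$. Once that quantitative off-diagonal decay is in hand, both the summability of $\gamma$ and the final application of Cotlar--Stein are routine.
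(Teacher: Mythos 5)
Your proposal is correct, but note that the paper itself contains no proof of this proposition: it is quoted as a standard fact with pointers to \cite{GrSj} and \cite{Zw}, so the comparison is really with the textbook arguments. Your reduction to the case $m=s=0$ by conjugating with $\langle D\rangle^{t}$ is exactly the standard one, and there is no circularity in it: the composition calculus for $\bar{S}^{\bullet}_{1,0}$ needed to see that $\langle D\rangle^{s-m}\operatorname{Op}(a)\langle D\rangle^{-s}=\operatorname{Op}(c)$ with $c\in\bar{S}^{0}_{1,0}$ is a stationary-phase statement at the level of symbols and does not presuppose $L^2$ boundedness. For the core $L^2$ estimate, however, the cited sources take different routes: Grigis--Sj\"ostrand use H\"ormander's square-root bootstrap --- choose $M>\sup|a|$, set $c=(M^2-|a|^2)^{1/2}\in\bar{S}^{0}_{1,0}$, so that $\operatorname{Op}(c)^*\operatorname{Op}(c)=M^2-\operatorname{Op}(a)^*\operatorname{Op}(a)+R$ with $R$ of order $-1$, which inductively reduces the problem to symbols of very negative order, where the Schur test applies --- while Zworski proves the stronger Calder\'on--Vaillancourt theorem by Cotlar--Stein over a lattice of unit phase-space cubes, which even covers $S^{0}_{0,0}$. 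Your dyadic Littlewood--Paley decomposition plus Cotlar--Stein is a third standard route: it exploits the $(1,0)$-structure (a gain of $2^{-j}$ per $\xi$-derivative on the $j$-th annulus) and produces explicit kernel bounds, whereas the square-root trick is shorter once the calculus is available, and the Calder\'on--Vaillancourt route buys greater generality.

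Two local corrections to your sketch, neither fatal. First, the phrase ``rapid decay of the $x$-Fourier transform of $a$'' is not literally available: a function with all $x$-derivatives bounded (e.g.\ $e^{ix\cdot v}$) need not have a decaying Fourier transform. The correct mechanism is non-stationary phase in $x$ against the phase $e^{ix\cdot(\xi'-\xi)}$ in the kernel of $A_j^*A_k$: on the supports one has $|\xi'-\xi|\gtrsim 2^{\max(j,k)}$, and each integration by parts costs only a bounded $x$-derivative of the symbols, yielding $\|A_j^*A_k\|\leq C_N 2^{-N\max(j,k)}$ for $2^{\max(j,k)}$ beyond a fixed seminorm-dependent threshold --- stronger than your $2^{-N|j-k|}$ --- with the finitely many remaining pairs handled by your uniform Schur bound; one should also treat the non-absolutely-convergent $x$-integral by composing the pointwise kernel bounds for $K_j$, $K_k$ with this oscillatory gain (or by inserting a partition of unity in $x$) rather than integrating by parts formally. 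Second, for the standard quantization you get $A_jA_k^*=0$ exactly when $|j-k|\geq 2$: the adjoint $A_k^*$ has symbol $\overline{a_k(y,\xi)}$ in the $y$-quantization with the same annular $\xi$-support, so $A_k^*u$ has Fourier support in $\{|\xi|\sim 2^k\}$ and $A_j$ annihilates it; thus only the family $A_j^*A_k$ requires the quantitative off-diagonal estimate. With these repairs your argument is complete and controlled by finitely many seminorms of $a$, as required.
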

We also have similar definitions for manifolds, see for example \cite[Section 14.2]{Zw}. In particular, we recall
\begin{prop}\label{cpt1}
Let $M$ be a compact manifold, $H\in \Psi^m_{1,0}(M)$ with $m<0$. Then $H:L^2(M)\to L^2(M)$ is compact.
\end{prop}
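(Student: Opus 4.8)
The plan is to factor $H$ through a Sobolev space of positive order and then invoke Rellich's compactness theorem, which is the only real content. Concretely, set $s=-m>0$. The manifold version of the boundedness statement in the preceding proposition asserts that every $H\in\Psi^m_{1,0}(M)$ maps $H^{t}(M)\to H^{t-m}(M)$ boundedly; taking $t=0$ and recalling $H^0(M)=L^2(M)$ shows that $H\colon L^2(M)\to H^{s}(M)$ is bounded. If one grants that the inclusion $\iota\colon H^{s}(M)\hookrightarrow L^2(M)$ is compact for every $s>0$ (Rellich's theorem on a compact manifold), then viewing $H$ as a map into $L^2(M)$ is the same as the composition $\iota\circ H$ of a bounded operator followed by a compact one, and the composition of a bounded operator with a compact operator is compact. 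This immediately yields the claim.

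Two ingredients need justification. For the mapping property $H^{t}(M)\to H^{t-m}(M)$, I would reduce to the Euclidean statement of the preceding proposition by a partition of unity subordinate to a finite atlas of coordinate charts, using the coordinate invariance of the classes $\Psi^m_{1,0}$ and cutting off each local symbol to have compact $x$-support so that it lies in $\bar S^m_{1,0}(T^*\mathbb{R}^n)$, where the Euclidean boundedness applies. For Rellich's theorem, the same localization reduces matters to showing that a family of functions bounded in $H^{s}(\mathbb{R}^n)$ with support in a fixed compact set is precompact in $L^2$; this follows from the Fréchet--Kolmogorov criterion, the uniform frequency decay supplied by the $H^s$ bound giving the required control of high-frequency tails and of the equicontinuity of translates.

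The main obstacle is thus concentrated entirely in Rellich's compactness theorem: the mapping properties are routine consequences of the preceding proposition, and once the compact embedding $H^{s}(M)\hookrightarrow L^2(M)$ for $s>0$ is in hand the proof is a one-line factorization argument. Since both the Sobolev mapping property and Rellich's theorem are classical, one may alternatively cite them from \cite{GrSj} or \cite{Zw} and obtain the proposition directly; I would nonetheless record the factorization $H=\iota\circ H$ explicitly, as it is exactly this structure that makes the compactness transparent.
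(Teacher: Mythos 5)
Your proposal is correct: the factorization $H=\iota\circ H$ through $H^{s}(M)$ with $s=-m>0$, followed by Rellich's compact embedding $H^{s}(M)\hookrightarrow L^2(M)$, is the standard proof of this fact. Note that the paper itself offers no argument at all --- it merely \emph{recalls} the proposition as a known result, citing \cite[Section 14.2]{Zw} --- so your write-up supplies exactly the proof that the cited references contain, and your closing remark that one could instead cite \cite{GrSj} or \cite{Zw} directly is precisely what the paper does.
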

\section{General result on the essential spectrum}
In this section we would like to study the essential spectrum of a self-adjoint pseudodifferential operator. The basic example of such operator is the Weyl quantization of a real function. We now give
\begin{proof}[Proof of Theorem \ref{thm1}]
Let $$A=\{\lambda|\exists (x_j,\xi_j)\in T^*M, |\xi_j|\to \infty, \mbox{ such that} \lim\limits_{j\to \infty}a(x_j,\xi_j)=\lambda\}$$ First we prove ${\rm Spec}_{\rm ess}(H)\subset A$. If $\lambda\notin A,$ then $\exists \epsilon, N >0 $ such that $|a(x,\xi)-\lambda|>\epsilon, \forall |\xi|>N$. So $\exists b\in C^\infty$ such that $b(x,\xi)=\frac{1}{a(x,\xi)-\lambda}, \forall |\xi|>N$.\\
Then $(\operatorname{Op}(a)-\lambda)\operatorname{Op}(b)=I-\operatorname{Op}(c)$, where $c\in S^{-1}_{1,0}$. By proposition \ref{cpt1}, $c(x, D)$ is compact, so $\operatorname{Op}(a)-\lambda$ has a right inverse up to a compact operator. Similarly, it has a left inverse up to a compact operator. So $\operatorname{Op}(a)-\lambda$ is a Fredholm operator, and hence its spectrum near 0 is discrete. So $\lambda$ is in the discrete spectrum. So ${\rm Spec}_{\rm ess}(H)\subset A$.

 Then we prove $A\subset {\rm Spec}_{\rm ess}(H)$. If $\lambda\in A$, then $\exists (x_0,\xi_j), |\xi_j|\to \infty$ such that $a(x_0,\xi_j)\to\lambda$, So for $0\leq\chi\in C^\infty(M)$, $\int_{M}\chi^2(x)=1 $, we have
$$(\operatorname{Op}(a)-\lambda)(\chi e^{ix\xi_j})=[\operatorname{Op}(a),\chi(x)]e^{ix\xi_j}+(a(x,\xi_j)-\lambda)\chi(x)e^{ix\xi_j}$$
Choose $\epsilon>0$. Since $|a(x,\xi)-a(x_0,\xi)|\leq \|\partial_xa\|_{L^\infty}|x-x_0|$, so we choose $\chi$ such that ${\rm diam}({\rm supp}(\chi))<\frac{\epsilon}{3\|\partial_xa\|_{L^\infty}}$, so that $|a(x_0,\xi)-a(x,\xi)|<\frac{\epsilon}{3}$ on ${\rm supp}\chi$ . Now choose $j$ big enough such that $|a(x_0,\xi_j)-\lambda|<\frac{\epsilon}{3}$, so that $|a(x,\xi_j)-\lambda|< \frac{2\epsilon}{3}$ on ${\rm supp}\chi$ and $$\|(a(x,\xi_j)-\lambda)\chi(x)e^{ix\xi_j}\|_{L^2}<\frac{2\epsilon}{3}$$ Since $[\operatorname{Op}(a),\chi]\in\Psi^{-1}_{1,0}$, it is a compact operator by Proposition \ref{cpt1}. We know $e^{ix\xi_j}$ tends to zero weakly by Riemann Lebesgue lemma, so when $j$ is big enough, we have $$\|[\operatorname{Op}(a),\chi]e^{ix\xi_j}\|_{L^2}<\frac{\epsilon}{3}$$ In conclusion, $\exists N_0$ such that $\forall j>N_0$, we have $\|(\operatorname{Op}(a)-\lambda)\chi e^{ix\xi_j}\|_{L^2}<\epsilon$, where $\|\chi e^{ix\xi_j}\|_{L^2}=1$. So $H-\lambda$ is not invertible, so $\lambda\in {\rm Spec}(H)$.

If $\lambda\in {\rm Spec}(H)\setminus {\rm Spec}_{\rm ess}(H)={\rm Spec}_{\rm d}(H)$, then there exists $\delta>0$ such that ${\rm Spec}(H)\cap(\lambda-\delta,\lambda+\delta)=\{\lambda\}$. Make an orthogonal decomposition $V=L^2(M)=V_1\oplus V_2={\rm ker}(H-\lambda)\oplus {\rm ker}(H-\lambda)^\perp$. Let $P:V\to V_1$ be the corresponding projection and $Q=I-P$. Now $H-\lambda$ is invertible on $H_2$ with $\|(H-\lambda)^{-1}Q\|\leq C=\delta^{-1}$. Select $0\leq \chi_1,...,\chi_N\in C^\infty(M)$ such that ${\rm supp}\chi_k\cap {\rm supp}\chi_l=\emptyset, \forall k\neq l$ and $${\rm diam}({\rm supp}(\chi_k))<\frac{\epsilon}{3\|\partial_xa\|_{L^\infty}}$$ $$\int_{M}\chi_k^2(x)=1,k=1,2,\dots,N$$ Let $u_k=\chi_k e^{ix\xi_j}$ for big enough $j$ such that $\|(\operatorname{Op}(a)-\lambda)u_k\|_{L^2}<\epsilon$. We have $$\|Qu_k\|\leq C\|(H-\lambda)Qu_k\|=C\|(H-\lambda)u_k\|<C\epsilon$$ Then $$\|u_k\|^2=\|Pu_k\|^2+\|Qu_k\|^2\leq\|Pu_k\|^2+C^2\epsilon^2$$ So $\|Pu_k\|^2\geq 1-C^2\epsilon^2$. On the other hand, we have $|\left\langle Pu_k,Pu_l\right\rangle|=|\left\langle Qu_k,Qu_l\right\rangle|\leq C^2\epsilon^2$ for $l\neq k$. Therefore, $\forall l\leq \frac{(1-C^2\epsilon^2)}{C^2\epsilon^2}$, every $l$ elements of $\{Pu_k\}$ form a linearly independent subset of $V_1$. So $${\rm dim} V_1\geq{\rm min}\left\{N,\left[\frac{(1-C^2\epsilon^2)}{C^2\epsilon^2}\right]\right\}$$ Let $N$ big and $\epsilon$ small, then ${\rm dim}\,{\rm ker}\, H$ can be larger than any number. So ${\rm dim}\,{\rm ker}\, H=\infty$. We get a contradiction with $\lambda\in {\rm Spec}_{\rm d}(H)$.
\end{proof}

The essential spectrum has regular structure by the connectedness of the sphere.
\begin{corr}\label{cor1}
Let $M$ be a compact manifold. Let $a\in S^{0}_{1,0}(T^*M)$ with $H=\operatorname{Op}(a)$. If the dimension $n\geq 2$, then $A={\rm Spec}_{\rm ess}(H)$ is a closed interval. If $n=1$, then $A={\rm Spec}_{\rm ess}(H)$ is the union of two closed intervals.
\end{corr}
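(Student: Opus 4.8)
The plan is to work directly with the description of $A={\rm Spec}_{\rm ess}(H)$ furnished by Theorem \ref{thm1}: $A$ is precisely the set of cluster values of the bounded continuous function $a$ as $|\xi|\to\infty$ in the fibers, i.e. $A=\bigcap_{R>0}\overline{a(\Omega_R)}$ where $\Omega_R=\{(x,\xi)\in T^*M:\ |\xi|\ge R\}$ and $|\cdot|$ is measured in a fixed Riemannian metric (harmless since $M$ is compact). Since $a\in S^0_{1,0}$ is bounded, each $\overline{a(\Omega_R)}$ is a nonempty compact subset of $\RR$, the family is decreasing in $R$, and so $A$ is a nonempty closed bounded set. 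The entire content is therefore to count the connected components of $A$, and I would tie this to the connectivity of $\Omega_R$. Throughout I assume $M$ connected; otherwise one argues component by component.

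Suppose $n\ge 2$. First I would check that $\Omega_R$ is path connected: the radial rescaling $(x,\xi)\mapsto\bigl(x,\xi+t(R/|\xi|-1)\xi\bigr)$ keeps $|\xi|\ge R$ and deformation retracts $\Omega_R$ onto the cosphere bundle $\{|\xi|=R\}$, an $S^{n-1}$-bundle over the connected base $M$ with connected fiber (as $n\ge2$), hence connected. Now take $\lambda<\nu<\mu$ with $\lambda,\mu\in A$; I claim $\nu\in A$. For each $R$, the definition of $A$ provides points $p,q\in\Omega_R$ with $a(p)<\nu<a(q)$ (take approximants of $\lambda$ and of $\mu$ with $|\xi|\ge R$). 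Joining $p$ to $q$ by a path in $\Omega_R$ and applying the intermediate value theorem to $a$ along it yields $z_R\in\Omega_R$ with $a(z_R)=\nu$; letting $R\to\infty$ gives points on which $a\equiv\nu$ with $|\xi|\to\infty$, so $\nu\in A$. Thus $A$ is an interval, and being closed and bounded it is a closed interval (possibly a point).

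For $n=1$ we have $M=S^1$ and $\Omega_R=S^1\times\bigl((-\infty,-R]\cup[R,\infty)\bigr)$ splits into the two path-connected half-cylinders $\Omega_R^{+}=S^1\times[R,\infty)$ and $\Omega_R^{-}=S^1\times(-\infty,-R]$. I would decompose $A=A^{+}\cup A^{-}$ according to whether the approximating sequence has $\xi_j\to+\infty$ or $\xi_j\to-\infty$; since any sequence with $|\xi_j|\to\infty$ has a subsequence of one of these types, this exhausts $A$. Each $A^{\pm}$ is the cluster set of $a$ restricted to $\pm\xi\to+\infty$, so the identical closedness argument and the identical intermediate value argument — now run inside the path-connected $\Omega_R^{\pm}$ — show that $A^{+}$ and $A^{-}$ are each closed intervals. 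Hence $A=A^{+}\cup A^{-}$ is a union of two closed intervals.

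I expect the only substantive point to be the intermediate value step combined with the connectivity of $\Omega_R$: everything hinges on being able to join an almost-$\lambda$ point to an almost-$\mu$ point by a path staying in $\{|\xi|\ge R\}$, which is exactly why the connectivity of the fiber sphere $S^{n-1}$ governs whether $A$ is one interval or two. The only bookkeeping to be careful about is that $A$ is nonempty and that nothing degenerates as $R\to\infty$; both follow at once from boundedness of $a$ and compactness of $M$, via the representation $A=\bigcap_{R}\overline{a(\Omega_R)}$ as a decreasing intersection of nonempty compact sets.
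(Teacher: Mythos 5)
Your proposal is correct and takes essentially the same route as the paper: an intermediate value theorem argument applied to $a$ along paths lying in the high-frequency region $\{|\xi|\ge R\}$, with the $n=1$ case handled by splitting $A=A^{+}\cup A^{-}$ according to $\xi_j\to\pm\infty$. The only difference is that you supply details the paper leaves implicit — path-connectedness of $\{|\xi|\ge R\}$ via the retraction onto the cosphere bundle, and closedness/nonemptiness of $A$ via the nested cluster-set representation $A=\bigcap_{R}\overline{a(\Omega_R)}$ — which strengthens rather than changes the argument.
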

\begin{proof}
When $n\geq 2$. Let $\alpha,\beta\in A$ and $\alpha<\lambda<\beta$, we need to prove $\lambda\in A$.

For any $k\in \mathbb{N}$, there exists $|\xi_{j_k}|>k$, $|\eta_{j_k}|>k$ and $x_0,x_1\in M$ such that $|a(x_0,\xi_{j_k})-\alpha|<\frac{1}{k}$ and $|a(x_1,\eta_{j_k})-\beta|<\frac{1}{k}$. Now we can find a continuous path $(\kappa_k(t),\gamma_k(t))$ on $T^*M$ connecting $(x_0,\xi_{j_k})$ and $(x_1,\eta_{j_k})$ with $|\gamma_k(t)|>k$. Let $\alpha+\frac{1}{k}<\lambda<\beta-\frac{1}{k}$, then by connectedness we have $a(\kappa_k(t_k),\gamma_k(t_k))=\lambda$ for some $t_k$. So we get $|\gamma_k(t_k)|\to \infty$ and $\lim\limits_{k\to\infty}a(\kappa_k(t_k),\gamma_k(t_k))=\lambda$. So $\lambda\in A$ as we want. So $A$ is a closed interval.

When $n=1$, we have $A=A_+\cup A_-$ where
$$A_\pm=\{\lambda|\exists (x_j,\xi_j)\in T^*M, \xi_j\to \pm\infty, \mbox{ such that} \lim\limits_{j\to \infty}a(x_j,\xi_j)=\lambda\}$$
By the same method, each term in this union is a closed interval.
  \end{proof}
  The result is a little different in dimension one because $S^0=\{\pm 1\}$ is not connected. One can construct a counterexample using the fact that there are exactly two directions in the real line and a symbol can behave differently in two directions.
\begin{corr}
Make the same assumption with Corollary \ref{cor1}. Let $n\geq 2$. If $\lambda\in A$ is not a limit point of the spectrum, then $A=\{\lambda\}$.
\end{corr}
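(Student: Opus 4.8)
The plan is to deduce this almost immediately from Corollary \ref{cor1}, since the entire topological content has already been packaged there. First I would recall that, under the stated assumptions with $n\geq 2$, Corollary \ref{cor1} gives that $A={\rm Spec}_{\rm ess}(H)$ is a closed interval, say $A=[\alpha,\beta]$ with $\alpha\leq\beta$. The whole task then reduces to ruling out the case $\alpha<\beta$.

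Next I would invoke the standard inclusion ${\rm Spec}_{\rm ess}(H)\subseteq{\rm Spec}(H)$, so that $A\subseteq{\rm Spec}(H)$. By hypothesis, $\lambda\in A$ is not a limit point of the spectrum, so there is a neighborhood $U$ of $\lambda$ with $U\cap{\rm Spec}(H)=\{\lambda\}$. Because $A$ is a subset of ${\rm Spec}(H)$, this immediately forces $U\cap A=\{\lambda\}$; that is, $\lambda$ is an isolated point of $A$. This step is where the inclusion of the essential spectrum in the full spectrum is used, and it is the only place one must be careful about the direction of the containment: being non-limit in ${\rm Spec}(H)$ transfers to being isolated in the smaller set $A$, but not conversely.

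Finally I would appeal to the elementary point-set fact that a non-degenerate closed interval has no isolated points: if $\alpha<\beta$, then every point of $[\alpha,\beta]$, and in particular $\lambda$, is a limit point of the interval, contradicting the previous paragraph. Hence $\alpha=\beta$ and $A=\{\lambda\}$. I do not expect any genuine obstacle beyond correctly citing Corollary \ref{cor1}; once the interval structure of $A$ is in hand the argument is purely topological, and the only subtlety worth stating explicitly is the transfer of the isolation property through the inclusion $A\subseteq{\rm Spec}(H)$.
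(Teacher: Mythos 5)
Your proof is correct and is essentially the argument the paper intends: the paper leaves this corollary without an explicit proof precisely because it follows at once from Corollary \ref{cor1} (the interval structure of $A$) together with the inclusion $A={\rm Spec}_{\rm ess}(H)\subseteq{\rm Spec}(H)$, which is exactly the route you take. Your transfer of the isolation property from ${\rm Spec}(H)$ to the subset $A$, followed by the observation that a nondegenerate closed interval has no isolated points, is the intended reasoning.
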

In this case $\lim\limits_{|\xi|\to \infty}\sup\limits_{x\in M}|a(x,\xi)-\lambda|=0$ and $A-\lambda$ is a compact operator. $\lambda$ is a discrete eigenvalue with ${\rm dim}\,{\rm ker}\,(H-\lambda)=\infty$. Actually $H-\lambda$ has finite rank.

\section{Absence of singular continuous spectrum for the circle}
In this section we want to use Mourre estimate to prove the absence of singular spectrum with some additional assumptions on the symbol.

We would like to study the self-adjoint pseudo-differential operators first. We will need the following lemmas.
\begin{lem}\label{lalala}\cite[Chapter 4]{CFKS}

Let $H:V\to V$ be a bounded self-adjoint operator on the Hilbert space $V$. If there exists a self-adjoint operator $A$, a compact operator $K$ and two nonnegative function $\chi,\chi'\in C^\infty_0(\mathbb{R})$ such that $H\in C^2(A)$ and
$$\chi(H)[iA,H]\chi(H)\geq \chi'(H)+ K$$
with ${\rm supp}(\chi')\subset {\rm supp}(\chi)$, then for any interval $[a,b]\subset\{x:\chi(x)>0\}$, $H$ has absolutely continuous spectrum inside $[a,b]$ with possibly finitely many embedded eigenvalues.
\end{lem}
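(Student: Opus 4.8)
The statement is the abstract Mourre theorem, so my plan is to run the standard positive-commutator machinery in two stages: a virial-type argument to control the point spectrum, followed by a limiting absorption principle (LAP) to exclude singular continuous spectrum and establish absolute continuity. I would begin by recasting the smooth Mourre estimate in spectral-projection form. Fix a closed interval $[a,b]\subset\{\chi>0\}$ on which $\chi'\geq\theta>0$ (this is the range where the commutator estimate carries information), and let $E=E_{[a,b]}(H)$ be the spectral projection. Since $\chi>0$ on $[a,b]$, $\chi(H)$ is boundedly invertible on $\operatorname{Ran}E$, and conjugating the hypothesis $\chi(H)[iA,H]\chi(H)\geq\chi'(H)+K$ by $\chi(H)^{-1}E$ yields the familiar form
$$E[iA,H]E\geq \theta' E + K'$$
with $\theta'>0$ and $K'$ compact. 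The assumption $H\in C^2(A)$ guarantees that $[iA,H]$ extends to a bounded operator (already $C^1(A)$ suffices here), so every expression is meaningful.

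The second step is the virial theorem and the ensuing finiteness of eigenvalues. Using the $C^1(A)$ regularity I would show rigorously that any eigenfunction $Hu=\lambda u$ with $\lambda\in[a,b]$ satisfies $\langle u,[iA,H]u\rangle=0$: the formal identity $\langle u, i(AH-HA)u\rangle = i\lambda\langle u,Au\rangle - i\lambda\langle u,Au\rangle=0$ is justified by approximating $A$ by bounded regularizations and passing to the limit. Feeding an orthonormal sequence of eigenfunctions $u_n$ with eigenvalues in $[a,b]$ into the projected estimate gives $0=\langle u_n,E[iA,H]Eu_n\rangle\geq\theta'+\langle u_n,K'u_n\rangle$; since $u_n\rightharpoonup 0$ weakly and $K'$ is compact, $\langle u_n,K'u_n\rangle\to 0$, forcing $0\geq\theta'>0$, a contradiction. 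Hence $H$ has only finitely many eigenvalues in $[a,b]$, each of finite multiplicity.

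With the eigenvalues isolated, I would upgrade to a strict Mourre estimate on subintervals avoiding them. Around any $\lambda_0\in[a,b]$ that is not an eigenvalue, $E_\Delta\to 0$ strongly as $\Delta\downarrow\{\lambda_0\}$, so $\|E_\Delta K' E_\Delta\|\to 0$ and the compact remainder can be absorbed, yielding $E_\Delta[iA,H]E_\Delta\geq \tfrac{\theta'}{2}E_\Delta$ on a small $\Delta$ around $\lambda_0$. The main obstacle, and the technical heart of the argument, is then the limiting absorption principle: from the strict estimate one derives, for $z=\lambda+i\mu$ with $\lambda\in\Delta$, a differential inequality in $\mu$ for the weighted resolvent $F(z)=\langle A\rangle^{-s}(H-z)^{-1}\langle A\rangle^{-s}$ with $s>1/2$, where the second commutator $[[H,A],A]$—bounded precisely because $H\in C^2(A)$—controls the error terms. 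Integrating this inequality produces bounds on $F(z)$ that are uniform as $\mu\downarrow 0$.

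Finally, uniform boundedness of the boundary values of $F(z)$ implies that the spectral measure of $H$, localized by $\langle A\rangle^{-s}$, has a locally bounded density with respect to Lebesgue measure on $\Delta$; standard arguments then force the spectral measure to be purely absolutely continuous there. Covering $[a,b]$ minus the finite eigenvalue set by such intervals $\Delta$, I would conclude that on $[a,b]$ the spectrum is absolutely continuous apart from the finitely many embedded eigenvalues already found. The delicate point throughout is the LAP differential inequality: tracking the domain of $A$ and justifying the commutator manipulations (through the bounded regularizations of $A$ and the $C^2(A)$ hypothesis) is where essentially all the work lies, the virial and projection steps being comparatively soft.
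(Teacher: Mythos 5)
Your outline is correct and is precisely the standard Mourre-theory argument---recasting the estimate in spectral-projection form, virial theorem plus compactness to get finitely many eigenvalues of finite multiplicity, then a limiting absorption principle whose error terms are controlled by the second commutator via $H\in C^2(A)$---which is exactly the proof in the cited reference \cite[Chapter 4]{CFKS}; the paper offers no proof of its own, quoting the lemma from there, so there is nothing to diverge from. One remark: your restriction to intervals on which $\chi'\geq\theta>0$ quietly repairs an imprecision in the statement as written (with only $\supp(\chi')\subset\supp(\chi)$ assumed, the hypothesis is vacuous where $\chi'$ vanishes, e.g.\ for $\chi'\equiv 0$), and this is harmless for the paper since its application takes $\chi'=C\chi^2$, for which $\{\chi>0\}=\{\chi'>0\}$.
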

\begin{rem}\cite[Chapter 7]{AMG}
Let $H\in L(V)$ and $A$ be a self-adjoint operator, then we say $H\in C^k(A)$ if $t\mapsto e^{-itA}Se^{itA}$
is strongly of class $C^k$.
\end{rem}

\begin{lem}\label{lem8}\cite[(8.1),(8.2)]{DiSj}
Let $\chi\in C^\infty_0(\mathbb{R})$, then
there exists an alomst analytic function $\tilde{\chi}\in C^\infty_0(\mathbb{C})$ such that
$|\bar{\partial}\tilde{\chi}|\leq C_N |\Im z|^N,\forall N>0$ and
$\tilde{\chi}|_\mathbb{R}=\chi$.
\end{lem}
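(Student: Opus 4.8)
The plan is to construct $\tilde\chi$ explicitly by a regularized Taylor expansion in the imaginary direction, i.e. the standard almost analytic extension. The starting point is the formal series
$$\sum_{k=0}^\infty \frac{(iy)^k}{k!}\chi^{(k)}(x),\qquad z=x+iy,$$
which is just the Taylor expansion of $\chi$ continued off the real axis, and which is annihilated by $\bar{\partial}=\tfrac12(\partial_x+i\partial_y)$ by a telescoping identity: the $\partial_x$-contribution $\frac{(iy)^k}{k!}\chi^{(k+1)}(x)$ of the $k$-th term exactly cancels the $i\partial_y$-contribution $-\frac{(iy)^k}{k!}\chi^{(k+1)}(x)$ of the $(k+1)$-st term. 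Since this series need not converge for general smooth $\chi$, I would insert cutoffs and define
$$\tilde\chi(x+iy):=\sum_{k=0}^\infty \frac{(iy)^k}{k!}\chi^{(k)}(x)\,\psi\!\left(\frac{y}{\delta_k}\right),$$
where $\psi\in C^\infty_0(\mathbb{R})$ equals $1$ near $0$ and $\delta_k\searrow 0$ is a rapidly decreasing sequence to be chosen.

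I would then verify the easy properties first. On the real axis $y=0$ every term with $k\ge 1$ vanishes because of the factor $(iy)^k$, leaving $\chi(x)\psi(0)=\chi(x)$, so $\tilde\chi|_\mathbb{R}=\chi$. Compact support is immediate once smoothness is known: $\chi$ is compactly supported in $x$ and $\psi(y/\delta_0)$ confines $|y|$, so $\tilde\chi\in C^\infty_0(\mathbb{C})$. For smoothness itself, the key estimate is the diagonal bound $\bigl|\tfrac{(iy)^k}{k!}\chi^{(k)}(x)\bigr|\le \tfrac{\delta_k^{k}}{k!}\|\chi^{(k)}\|_\infty$ on the support $\{|y|\le C\delta_k\}$; choosing $\delta_k$ small enough (depending on $\|\chi^{(k)}\|_\infty$ and finitely many derivatives of $\psi$) so that each term together with all its derivatives up to order $k$ is bounded by $2^{-k}$ forces the series and all its termwise derivatives to converge uniformly, whence $\tilde\chi\in C^\infty(\mathbb{C})$.

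Finally I would estimate $\bar{\partial}\tilde\chi$. Applying $\bar{\partial}$ term by term, the would-be-holomorphic parts telescope as above and the residual consists only of terms in which a derivative has landed on a cutoff, each carrying a factor $\psi'(y/\delta_k)$ or a cutoff mismatch $\psi(y/\delta_k)-\psi(y/\delta_{k+1})$, all of which vanish in a neighborhood of $y=0$. Thus at a point $x+iy$ with $|y|$ small, only the boundedly many indices $k$ with $\delta_k\sim|y|$ contribute, and for these the prefactor $|y|^k/k!\sim\delta_k^{k}/k!$ is extremely small, so choosing $\delta_k$ to decay fast enough yields $|\bar{\partial}\tilde\chi|\le C_N|\Im z|^N$ for every $N$. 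I expect the main obstacle to be exactly this simultaneous bookkeeping: the single sequence $\delta_k$ must be selected by one diagonal argument that is small enough to force both $C^\infty$-convergence of the series and the vanishing of $\bar{\partial}\tilde\chi$ to infinite order at the real axis. Reconciling these two competing smallness demands is the only genuine subtlety; the remaining content is a routine Leibniz-rule computation.
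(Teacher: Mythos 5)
The paper itself contains no proof of this lemma: it is quoted directly from Dimassi--Sj\"ostrand \cite{DiSj}, so the only comparison available is with the proof in that reference. Your construction is nevertheless correct; it is the classical H\"ormander-style almost analytic extension by a Taylor series in $\Im z$ with cutoffs, and your sketch correctly isolates the one genuine subtlety, namely that a single diagonal choice of the sequence $\delta_k$ must simultaneously force $C^\infty$ convergence of the series and infinite-order vanishing of $\bar\partial\tilde\chi$ on the real axis. The route in \cite{DiSj} is genuinely different: there the extension is built through the Fourier transform (Mather's trick), essentially $\tilde\chi(x+iy)=\frac{1}{2\pi}\int_{\mathbb{R}}e^{i(x+iy)\xi}\psi(y\xi)\hat\chi(\xi)\,d\xi$ with $\psi\in C^\infty_0(\mathbb{R})$ equal to $1$ near $0$, multiplied by cutoffs in $x$ and $y$ to gain compact support. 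Applying $\bar\partial$ there produces the factor $\xi\psi'(y\xi)$, supported where $|\xi|\sim|\Im z|^{-1}$, so the bound $|\bar\partial\tilde\chi|\le C_N|\Im z|^N$ falls out immediately from the rapid decay of $\hat\chi$ --- no sequence $\delta_k$, no bookkeeping. That is what the Fourier route buys: a two-line $\bar\partial$ estimate. Your route buys elementarity (no Fourier analysis at all) at the price of the diagonal argument. If you write yours up in full, one step to state more carefully: for fixed $N$, the finitely many residual terms with $k<N$ are not controlled because $\delta_k^k/k!$ is small, but because on their supports $|y|\ge c\,\delta_{k+1}$, hence $|y|^k=|y|^N|y|^{k-N}\le (c\,\delta_{k+1})^{k-N}|y|^N$; they contribute $C_N|\Im z|^N$ with a constant depending on $\delta_1,\dots,\delta_N$, which is harmless but is a separate case from the tail $k\ge N$, where your argument as stated applies.
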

%\begin{proof}
%Let
%$$\tilde{\chi}=\frac{\tilde{\psi}(x)\psi(y)}{2\pi}\int %^{i(x+iy)\xi}\psi(y\xi)\hat{\chi}(\xi)d\xi$$
%\end{proof}
\begin{prop}\label{lem9}\cite[Theorem 8.1]{DiSj}
Let $H$ be a self-adjoint operaotr on a Hilbert space, $\chi\in C^2_0(\mathbb{R})$, $\tilde{\chi}\in C^1_0(\mathbb{C})$ be an extension of $\chi$ such that $\bar{\partial}\tilde{\chi}=O(|\Im z|)$, then
$$\chi(H)=-\frac{1}{\pi}\int\bar{\partial}\tilde{\chi}(z)(z-H)^{-1}dz$$
\end{prop}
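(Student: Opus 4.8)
The plan is to reduce the operator identity to a scalar identity on $\sigma(H)$ via the spectral theorem and then to prove the scalar identity as a form of the Cauchy--Pompeiu formula. Write $E$ for the projection-valued spectral measure of $H$ and, for $u,v\in V$, let $\mu_{u,v}(\cdot)=\langle E(\cdot)u,v\rangle$ be the associated complex measure, which is finite (total variation $\le\|u\|\,\|v\|$) and supported on $\sigma(H)\subset\mathbb{R}$. Then $\langle\chi(H)u,v\rangle=\int\chi(\lambda)\,d\mu_{u,v}(\lambda)$ and $\langle(z-H)^{-1}u,v\rangle=\int(z-\lambda)^{-1}\,d\mu_{u,v}(\lambda)$. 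Interpreting $dz$ in the statement as Lebesgue measure $L(dz)$ on $\mathbb{C}$, the proposition will follow once I establish, for every real $\lambda$,
$$\chi(\lambda)=-\frac{1}{\pi}\int_{\mathbb{C}}\bar{\partial}\tilde{\chi}(z)\,\frac{1}{z-\lambda}\,L(dz),$$
and then integrate this identity against $d\mu_{u,v}$, interchanging the two integrations.

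First I would check that the operator-valued integral converges in operator norm, which is where the hypothesis $\bar{\partial}\tilde{\chi}=O(|\Im z|)$ is used. Since $H$ is self-adjoint, $\|(z-H)^{-1}\|\le|\Im z|^{-1}$, so the integrand is bounded by $C\,|\bar{\partial}\tilde{\chi}(z)|\,|\Im z|^{-1}=O(1)$ near the real axis, and as $\tilde{\chi}$ has compact support the integrand is supported in a compact set; hence the integral converges absolutely. The same estimate, combined with $|(z-\lambda)^{-1}|\le|\Im z|^{-1}$ for real $\lambda$, shows that $\bar{\partial}\tilde{\chi}(z)(z-\lambda)^{-1}$ is bounded uniformly in $\lambda$ and compactly supported in $z$; this is exactly what licenses the Fubini interchange against the finite measure $\mu_{u,v}$ at the end.

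To prove the scalar identity I would excise a small disk and work on the annulus $\Omega_\epsilon=D(\lambda,R)\setminus\overline{D(\lambda,\epsilon)}$, with $R$ large enough that $\supp\tilde{\chi}\subset D(\lambda,R)$. On $\Omega_\epsilon$ the function $(z-\lambda)^{-1}$ is holomorphic, so $\bar{\partial}\bigl(\tilde{\chi}(z)(z-\lambda)^{-1}\bigr)=\bar{\partial}\tilde{\chi}(z)(z-\lambda)^{-1}$, and the complex Stokes formula $\int_{\Omega}\bar{\partial}f\,L(dz)=\frac{1}{2i}\oint_{\partial\Omega}f\,dz$ gives
$$\int_{\Omega_\epsilon}\bar{\partial}\tilde{\chi}(z)\,\frac{1}{z-\lambda}\,L(dz)=-\frac{1}{2i}\oint_{|z-\lambda|=\epsilon}\tilde{\chi}(z)\,\frac{1}{z-\lambda}\,dz,$$
the outer boundary contributing nothing because $\tilde{\chi}$ vanishes there. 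Letting $\epsilon\to0$, the right-hand side tends to $-\frac{1}{2i}\cdot 2\pi i\,\tilde{\chi}(\lambda)=-\pi\chi(\lambda)$ by continuity of $\tilde{\chi}$ together with $\tilde{\chi}|_{\mathbb{R}}=\chi$, while the left-hand side tends to the integral over all of $\mathbb{C}$ by the absolute integrability established above. This yields the displayed scalar identity.

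The main obstacle is the bookkeeping of the interaction between the singularity of the resolvent at the real axis and the vanishing of $\bar{\partial}\tilde{\chi}$ there: one must verify that the $O(|\Im z|)$ decay genuinely dominates the $|\Im z|^{-1}$ blow-up \emph{uniformly}, so that both the operator integral and the $\lambda$-family of scalar integrals converge absolutely and Fubini applies. Once that is in place, integrating the scalar identity against $d\mu_{u,v}$ and interchanging gives
$$\langle\chi(H)u,v\rangle=-\frac{1}{\pi}\int_{\mathbb{C}}\bar{\partial}\tilde{\chi}(z)\,\langle(z-H)^{-1}u,v\rangle\,L(dz)$$
for all $u,v\in V$, which is the asserted operator identity.
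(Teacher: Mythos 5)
Your proposal is correct, and it is essentially the standard proof of the Helffer--Sj\"ostrand formula: reduction via the spectral theorem to the scalar Cauchy--Pompeiu identity, with the $O(|\Im z|)$ bound on $\bar{\partial}\tilde{\chi}$ offsetting the resolvent bound $\|(z-H)^{-1}\|\le|\Im z|^{-1}$ to justify absolute convergence and Fubini. The paper itself gives no proof of this proposition --- it is quoted directly from \cite[Theorem 8.1]{DiSj} --- and your argument matches the one in that cited source, so there is nothing to flag; the only cosmetic remark is that the hypothesis $\chi\in C^2_0(\mathbb{R})$ is needed only to guarantee the existence of an extension $\tilde{\chi}$ with the stated properties, which your proof correctly treats as given.
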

The method of Mourre estimate applies perfectly to the dimension one case if we add a homogeneity assumption to the principal symbol.
\begin{proof}[Proof of Theorem \ref{thm2}] Let $a\in S^0_{1,0}(S^1\times\mathbb{R})$ such that $H=\operatorname{Op}(a)$ is self-adjoint. By our assumption $a$ has the form $a(x,\xi)=a_0(x,\xi)+a_1(x,\xi)$ where $a_0(x,\xi)=a_0(x,\frac{\xi}{|\xi|})$ for $|\xi|\geq 1$, and $a_1\in S^{-1}_{1,0}(S^1\times\mathbb{R})$. Let $[a,b]\subset \mathbb{R}\setminus N$. Since the essential spectrum is known by the previous section, we only need to prove that $H$ has absolutely continuous spectrum with possibly finitely many embedded eigenvalues inside $[a,b]$.

 The point here is that $a_0$ has only two parts (correspond to two directions on $\mathbb{R}$) $a_0(x,1)$ and $a_0(x,-1)$ so that it is easy to construct Mourre estimate. Since $H$ is self-adjoint we can assume $a_0$ is real function. Let $(a,b)\Subset (a',b')\Subset \mathbb{R}\setminus N$ be intervals that exclude critical values of the principal symbol.

We now take $b=\phi(x,\xi)\xi\in S^1_{\rm cl}(S^1\times\mathbb{R})$ with a real function $\phi=\partial_xa_0(x,\xi)$. Let $A=b^w(x,D)$ be the corresponding self-adjoint operator, then $\sigma({[iA,H]})=\phi(x,\xi)\partial_x a_0(x,\xi)=|\partial_xa_0(x,\xi)|^2\geq C> 0$ on $\{x\in S^1: a_0(x,\pm 1)\in (a',b')\}$. Then we select $\chi\in C^\infty_0(\mathbb{R})$ such that ${\rm supp}(\chi)\subset (a',b')$ and $\chi=1$ on $(a,b)$. We would like to prove there exists a compact operator $K$ such that $\chi(H)[iA,H]\chi(H)\geq C\chi^2(H)+K$.

%Let $P,Q:L^2(\mathbb{T}^1)\to L^2(\mathbb{T}^1)$ be the projection operators: $P(\sum c_ku_k)=\sum\limits_{k\geq 0}c_k u_k$ and $Q=I-P$, where $u_k=e^{2k\pi ix}$.\\
%Now we can use the projection operator to divide the question into two parts. Let $H=PHP+QHQ+PHQ+QHP$

Let $\tilde{\chi}$ be the almost analytic approximation of $\chi$ as in Lemma \ref{lem8}. Since $((z-a_0)^{-1})(x,D)-(z-a(x,D))^{-1}$ is compact, by Proposition \ref{lem9} we have
\begin{align*}
    \chi(H)u&=-\frac{1}{\pi}\int\bar{\partial}\tilde{\chi}(z)(z-a(x,D))^{-1}udz\\
    &=-\frac{1}{\pi}\int\bar{\partial}\tilde{\chi}(z)((z-a_0)^{-1})(x,D)udz\\
    & +\frac{1}{\pi}\int\bar{\partial}\tilde{\chi}(z)(((z-a_0)^{-1})(x,D)-(z-a(x,D))^{-1})udz\\
     &=(\chi\circ a_0)(x,D)u+Ku\\
\end{align*}
for some compact operator $K$.
So $$\sigma({\chi(H)[iA,H]\chi(H)})=(\chi\circ a_0)^2|\partial_x a_0|^2$$ and $\sigma({\chi^2(H)})=(\chi\circ a_0)^2$. Since $|\partial_x a_0(x,\pm 1)|^2\geq C$ on ${\rm supp \chi\circ a_0}$, we get
$$\chi(H)[iA,H]\chi(H)u\geq C\chi^2(H)u+Ku$$
for some compact operator $K$.

Since $[A,H]$ and $[A,[A,H]]$ are 0-th order pseudodifferential operators, we have $H\in C^2(A)$. So we finish the proof by Lemma \ref{lalala}.
\end{proof}

There is a close relationship between self-adjoint operators and unitary operators by Cayley transform. In \cite{FRA} the method of Mourre estimate is applied to unitary operators to get

\begin{lem}{\cite[THeorem 2.7]{FRA}}
Let $U$ be a unitary operator on a Hilbert space $V$. If there exists a self-adjoint operator $A$, a compact operator $K$, an open interval $\Theta\subset S^1$ and $C>0$ such that $U\in C^2(A)$ and
$$E^U(\Theta)U^*[A,U]E^U(\Theta)\geq C E^U(\Theta)+K$$
Then $U$ has continuous spectrum with possibly finitely many embedded eigenvalues in $\Theta$.
\end{lem}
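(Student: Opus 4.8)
The plan is to separate the two conclusions---finiteness of the embedded point spectrum, and absence of singular continuous spectrum---and to treat each by the unitary analogue of the classical Mourre machinery (as in \cite{AMG} for the self-adjoint case). The starting observation is that the condition $U\in C^1(A)$ guarantees that $[A,U]=AU-UA$ extends from $D(A)$ to a bounded operator, and that $U^*[A,U]=U^*AU-A$ is self-adjoint, since $(U^*AU-A)^*=U^*AU-A$. Thus the Mourre hypothesis $E^U(\Theta)U^*[A,U]E^U(\Theta)\ge C\,E^U(\Theta)+K$ is a genuine inequality between bounded self-adjoint operators, and $U^*[A,U]$ plays exactly the role that $[iA,H]$ plays in the self-adjoint Lemma \ref{lalala}.

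First I would establish the finiteness of embedded eigenvalues via the unitary virial theorem. If $U\psi=e^{i\theta}\psi$ with $e^{i\theta}\in\Theta$, then, using that $C^1(A)$ regularity places $\psi$ in $D(A)$,
$$\langle\psi,U^*[A,U]\psi\rangle=\langle U\psi,AU\psi\rangle-\langle\psi,A\psi\rangle=|e^{i\theta}|^2\langle\psi,A\psi\rangle-\langle\psi,A\psi\rangle=0.$$
Pairing the Mourre estimate against such a $\psi$ gives $0\ge C\|\psi\|^2+\langle\psi,K\psi\rangle$, so that $\langle\psi,(-K)\psi\rangle\ge C\|\psi\|^2$ on the linear span $V_\Theta$ of all eigenvectors with eigenvalue in $\Theta$. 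Since $-K$ is compact yet bounded below by $C>0$ on $V_\Theta$, the space $V_\Theta$ must be finite-dimensional; hence there are only finitely many eigenvalues in $\Theta$, each of finite multiplicity.

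The substantive part is the limiting absorption principle, which delivers the absolutely continuous spectrum. The first step is to upgrade the Mourre estimate to a \emph{strict} one, $E^U(\Theta_0)U^*[A,U]E^U(\Theta_0)\ge C'E^U(\Theta_0)$, on any closed sub-arc $\Theta_0\subset\Theta$ avoiding the finitely many eigenvalues found above: for each non-eigenvalue $e^{i\theta_0}\in\Theta$ one has $E^U(\Theta')\to 0$ strongly as the arc $\Theta'\ni e^{i\theta_0}$ shrinks to $e^{i\theta_0}$, so $E^U(\Theta')KE^U(\Theta')\to 0$ in norm by compactness of $K$, and the remainder is absorbed into the positive term on a small enough arc; finitely many such arcs cover $\Theta_0$. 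With the strict estimate in hand, I would introduce a regularized resolvent $R_\epsilon(z)=(U-z-i\epsilon\,U^*[A,U])^{-1}$ for $z$ approaching the arc from $|z|\ne1$, and establish a Mourre-type differential inequality for the weighted quantity $\langle A\rangle^{-s}(U-z)^{-1}\langle A\rangle^{-s}$ with $s>\tfrac12$, controlling its growth as $|z|\to1$. Integrating this inequality yields uniform bounds and boundary values for $\langle A\rangle^{-s}(U-z)^{-1}\langle A\rangle^{-s}$ as $z\to\Theta_0$, which by the standard criterion forces the spectrum of $U$ in $\Theta_0$ to be purely absolutely continuous; exhausting $\Theta$ minus the finite eigenvalue set by such $\Theta_0$ completes the proof.

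I expect the differential-inequality step to be the main obstacle: one must show that the second-order regularity $U\in C^2(A)$ suffices to control $[A,R(z)]$ and $[A,[A,R(z)]]$ uniformly up to the unit circle, and that the positivity from the strict Mourre estimate dominates the resulting error terms. An alternative that avoids building the unitary calculus from scratch is the Cayley transform $H=i(1+U)(1-U)^{-1}$: since the hypothesis is invariant under multiplying $U$ by a constant phase (both $U^*[A,U]$ and $E^U(\cdot)$ are unchanged), one may first rotate so that $\overline{\Theta}$ avoids $z=1$, then transfer the Mourre estimate and the $C^2(A)$ regularity to $H$ through the conformal map and invoke Lemma \ref{lalala}; the cost is a careful tracking of domains and of the conformal factor relating $[iA,H]$ to $U^*[A,U]$.
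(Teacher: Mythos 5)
First, a point of comparison: the paper does not prove this lemma at all --- it is quoted directly from \cite[Theorem 2.7]{FRA}, so the only proof to measure you against is the one in that reference. Your outline (self-adjointness of $U^*[A,U]=U^*AU-A$, a unitary virial theorem giving local finiteness of the point spectrum, upgrading to a strict Mourre estimate on closed sub-arcs avoiding those eigenvalues, then a limiting absorption principle for $\langle A\rangle^{-s}(U-z)^{-1}\langle A\rangle^{-s}$ via a differential inequality) is indeed the skeleton of the argument there. As written, however, the proposal is a plan with genuine gaps rather than a proof.

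Two of your justifications are wrong. (i) Nothing in the $C^1(A)$ condition places eigenvectors of $U$ in $D(A)$; the virial identity $\langle\psi,U^*[A,U]\psi\rangle=0$ is true under $C^1(A)$, but it must be proved by a regularization argument (e.g.\ replacing $A$ by $A(1+i\epsilon A)^{-1}$ and passing to the limit, cf.\ \cite[Chapter 7]{AMG}), precisely because eigenvectors need not lie in $D(A)$; your formal computation is meaningless when $\psi\notin D(A)$. (ii) Your finiteness argument claims $\langle\psi,(-K)\psi\rangle\geq C\|\psi\|^2$ for every $\psi$ in the span $V_\Theta$ of eigenvectors, but the virial identity holds only for individual eigenvectors: for eigenvectors with distinct eigenvalues the cross terms, formally $\langle\psi_j,U^*[A,U]\psi_k\rangle=(e^{i(\theta_k-\theta_j)}-1)\langle\psi_j,A\psi_k\rangle$, need not vanish, so the quadratic form of $U^*[A,U]$ does not vanish on $V_\Theta$ and the inequality does not pass to linear combinations. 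The standard fix: if $\Theta$ contained infinitely many eigenvalues counted with multiplicity, an orthonormal sequence of eigenvectors $\psi_n$ would converge weakly to $0$, so $\langle\psi_n,K\psi_n\rangle\to 0$ by compactness of $K$, contradicting $0\geq C+\langle\psi_n,K\psi_n\rangle$ for all $n$.

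The more serious issue is that the actual content of the theorem --- the limiting absorption principle --- is asserted, not proved, and you flag it yourself as ``the main obstacle.'' One must show that the regularized resolvent you propose, $(U-z-i\epsilon U^*[A,U])^{-1}$, exists for small $\epsilon$ (this already requires the strict Mourre estimate), derive and close the differential inequality in $\epsilon$, and verify that $C^2(A)$ regularity controls all error terms uniformly up to the unit circle; this is exactly the technical work carried out in \cite{FRA}, and it is where the theorem lives. The Cayley-transform alternative is also not a shortcut: $H=i(1+U)(1-U)^{-1}$ requires $1\notin\sigma_{\rm p}(U)$ to be densely defined, is unbounded in general (so Lemma \ref{lalala}, stated for bounded self-adjoint operators, does not apply as is), and the transformed commutator $[iA,H]$ acquires factors of $(1-U)^{-1}$ that degrade both the regularity class and the positivity of the estimate; avoiding these defects is one of the motivations given in \cite{FRA} for developing the unitary theory directly.
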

We can apply this lemma to get a theorem for unitary pseudodifferential operators.
\begin{thm}
Assume we have $a\in S^0_{1,0}(S^1\times\mathbb{R})$ such that $U={\rm Op}(a)$ is unitary. Suppose the principal symbol $a_0(x,\xi)=\sigma(U)$ is homogeneous with respect to $\xi$. Let $\Theta \Subset S^1\setminus(\{a_0(x,1):\partial_xa_0(x,1)=0\}\cup\{a_0(x,-1):\partial_xa_0(x,-1)=0\})$  be an open interval, then $H$ has absolutely continuous spectrum with possibly finitely many eigenvalues inside $\Theta$.
\end{thm}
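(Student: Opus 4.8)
The plan is to mirror the proof of Theorem \ref{thm2}, replacing the self-adjoint Mourre estimate of Lemma \ref{lalala} by the unitary one from \cite{FRA}. Write $a=a_0+a_1$ with $a_0$ homogeneous of degree $0$ and $a_1\in S^{-1}_{1,0}(S^1\times\mathbb{R})$, so that $U\equiv\operatorname{Op}(a_0)$ modulo compacts. Since $U$ is unitary, $U^*U=I$ forces $|a_0|^2=1$ on $\{|\xi|\geq1\}$; thus on the two directions $\xi=\pm1$ we may write $a_0(x,\pm1)=e^{i\theta_\pm(x)}$ with $\theta_\pm$ smooth (the phase is only defined modulo $2\pi$, but its derivative $\theta_\pm'=-i\,\overline{a_0}\,\partial_xa_0$ is globally well defined and real). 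Fix nested open arcs $\Theta\Subset\Theta'\Subset S^1\setminus(\{a_0(x,1):\partial_xa_0(x,1)=0\}\cup\{a_0(x,-1):\partial_xa_0(x,-1)=0\})$, and choose $\chi\in C^\infty(S^1)$ with $\operatorname{supp}\chi\subset\Theta'$ and $\chi\equiv1$ on $\Theta$.

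For the conjugate operator I take $b=\phi(x,\xi)\,\xi\in S^1_{\rm cl}(S^1\times\mathbb{R})$ with the real, degree-$0$ function $\phi=-i\,\overline{a_0}\,\partial_xa_0$, and set $A=b^w(x,D)$, self-adjoint. Since $\partial_\xi a_0=0$ for $|\xi|\geq1$, a direct computation of the Poisson bracket gives $\sigma([A,U])=\tfrac1i\{b,a_0\}=\tfrac1i\phi\,\partial_xa_0$, and hence
\[
\sigma(U^*[A,U])=\overline{a_0}\cdot\tfrac1i\phi\,\partial_xa_0=-(\overline{a_0}\,\partial_xa_0)^2=|\partial_xa_0|^2=:g\ge 0 .
\]
The operator $U^*[A,U]=U^*AU-A$ is self-adjoint with real principal symbol $g$. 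Because $\Theta'$ excludes the critical values of $a_0(\cdot,\pm1)$, we have $\partial_xa_0(x,\pm1)\neq0$ wherever $a_0(x,\pm1)\in\operatorname{supp}\chi$, so by compactness of $S^1\times\{\pm1\}$ there is $C>0$ with $g\geq C$ on $\operatorname{supp}(\chi\circ a_0)$.

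The key step is the functional-calculus identity $\chi(U)=(\chi\circ a_0)(x,D)+K$ with $K$ compact, the unitary analogue of the line $\chi(H)=(\chi\circ a_0)(x,D)+K$ in the proof of Theorem \ref{thm2}. I would obtain it from the fact that the norm closure $\mathcal{A}$ of the classical order-$0$ operators is a $C^*$-algebra containing the compacts $\mathcal{K}$, and that the principal symbol descends to a $*$-isomorphism $\sigma:\mathcal{A}/\mathcal{K}\to C(S^*S^1)$ (the essential-spectrum correspondence underlying Theorem \ref{thm1}). Since $U$ is unitary and lies in $\mathcal{A}$, continuous functional calculus keeps $\chi(U)\in\mathcal{A}$, and with $\pi:\mathcal{A}\to\mathcal{A}/\mathcal{K}$ the quotient map one has $\sigma(\pi(\chi(U)))=\chi\circ\sigma(\pi(U))=\chi\circ a_0|_{S^*}=\sigma(\pi((\chi\circ a_0)(x,D)))$; injectivity of $\sigma$ then gives the claim. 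Alternatively one can build an almost analytic extension of $\chi$ relative to the circle $S^1$, combine the Helffer--Sjöstrand formula of Proposition \ref{lem9} with the resolvent parametrix $(z-U)^{-1}=((z-a_0)^{-1})(x,D)+\text{compact}$ valid for $z\notin S^1$, and integrate, exactly as in the self-adjoint case. Granting this, $\chi(U)U^*[A,U]\chi(U)-C\chi(U)^2$ has principal symbol $(\chi\circ a_0)^2(g-C)\ge0$, so the sharp G\aa rding inequality yields
\[
\chi(U)\,U^*[A,U]\,\chi(U)\ \ge\ C\,\chi(U)^2+K'
\]
for some compact $K'$. Conjugating by $E^U(\Theta)$ and using $E^U(\Theta)\chi(U)=E^U(\Theta)$ (since $\chi\equiv1$ on $\Theta$) turns this into $E^U(\Theta)U^*[A,U]E^U(\Theta)\ge C\,E^U(\Theta)+E^U(\Theta)K'E^U(\Theta)$, the estimate required by the lemma of \cite{FRA}.

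Finally $[A,U]$ and $[A,[A,U]]$ are order-$0$ pseudodifferential operators, hence bounded, so $U\in C^2(A)$; the unitary Mourre lemma then gives absolutely continuous spectrum with possibly finitely many embedded eigenvalues inside $\Theta$, as claimed. The main obstacle is the functional-calculus identity of the third paragraph: unlike the self-adjoint setting, Proposition \ref{lem9} does not apply verbatim, so one must either justify the $C^*$-algebraic symbol isomorphism $\mathcal{A}/\mathcal{K}\cong C(S^*S^1)$ or set up an almost analytic functional calculus adapted to the spectrum $S^1$. The remaining ingredients (the symbol computation of $U^*[A,U]$, the G\aa rding step, and the $C^2(A)$ verification) are routine once this is in place.
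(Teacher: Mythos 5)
Your proposal is correct and follows essentially the same route as the paper: you construct the identical conjugate operator $A=b^w(x,D)$ (your $\phi=-i\,\overline{a_0}\,\partial_x a_0$ coincides with the paper's $i a_0\partial_x\overline{a_0}$ by the relation $\overline{a_0}\partial_x a_0+a_0\partial_x\overline{a_0}=0$), compute $\sigma(U^*[A,U])=|\partial_x a_0|^2$, and feed the resulting Mourre estimate into the unitary commutator lemma of \cite{FRA}, which is exactly what the paper does before closing with ``the same method as before.'' The only difference is that you make explicit two details the paper leaves implicit---the functional-calculus identity $\chi(U)=(\chi\circ a_0)(x,D)+K$ (where your observation that Proposition \ref{lem9} does not apply verbatim to a unitary operator is a fair point, and both of your proposed fixes, the $C^*$-quotient symbol isomorphism and an almost-analytic calculus adapted to the unit circle, are sound) and the passage from the $\chi(U)$ estimate to the $E^U(\Theta)$ estimate required by \cite{FRA}.
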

\begin{proof}
  We have $|a_0(x,\frac{\xi}{|\xi|})|^2-1\in S^{-1}_{1,0}(S^1\times\mathbb{R})$ since $U$ is unitary. Let $\xi\to\infty$ we get $|a(x,\pm 1)|^2=1$, so we may assume $a_0(x,\xi)$ takes values on the unit circle $S^1$. By taking derivative, we get $\bar{a}_0\partial_xa_0+a_0\partial_x\bar{a}_0=0$. So there exists a pure imaginary function $\phi(x,\xi)=a_0(x,\xi)\partial_x\bar{a}_0\in S^0_{1,0}(S^1\times\mathbb{R})$. Then $b(x,\xi)=i\phi(x,\xi)\xi\in S^1_{1,0}(S^1\times\mathbb{R})$ is a real function, so $A=b^w(x,D)$ is a self-adjoint operator. Now
$$\sigma({U^*[A,U]})=|a_0|^2|\partial_x a_0|^2=|\partial_x a_0|^2$$
Since $[A,U]$ and $[A,[A,U]]$ are 0-th order pseudodifferential operators, we have $U\in C^2(A)$. Then we can apply the same method as before to finish the proof.
\end{proof}

In \cite[Section 4]{Na} the unitary scattering matrix $S(\lambda)\in \Psi^0_{1,0}(S^1\times\mathbb{R})$ with the principal symbol $$a(x,\xi)=\exp(\frac{i a\pi }{\sqrt{2\lambda}}\sin x\frac{\xi}{\langle\xi\rangle})$$ is considered. It provides a good example of a unitary operator with absolutely continuous spectrum. The method of Mourre estimate used here is the same as that in \cite{Na}.

\noindent\textbf{Acknowledgements.} This note is based on an undergraduate research project supervised by Maciej Zworski in Berkeley in the spring of 2019. I would like to thank him for introducing this topic and a lot of helpful discussions. The research was supported in part by the National Science Foundation grant DMS-1500852.

\end{document}